\title{A Cubic Surface of Revolution}
\author{Mark B. Villarino\\[6pt]
Escuela de Matem\'atica, Universidad de Costa Rica,\\
11501 San Jos\'e, Costa Rica}
\date{\today}
\newtheorem{thm}{Theorem}
\def\section{\@startsection{section}{1}{\z@}{-3.5ex plus -1ex minus
			  -.2ex}{2.3ex plus .2ex}{\large\bf}}
\def\subsection{\@startsection{subsection}{2}{\z@}{-3.25ex plus -1ex
			  minus -.2ex}{1.5ex plus .2ex}{\normalsize\bf}}
\renewcommand{\@dotsep}{200} %% suppress dots in Contents
\renewcommand{\leq}{\leqslant}  %% (to save typing)
\newcommand{\half}{{\mathchoice{\thalf}{\thalf}{\shalf}{\shalf}}}
\newcommand{\shalf}{{\scriptstyle\frac{1}{2}}} %% tiny fraction  1/2
\newcommand{\thalf}{\tfrac{1}{2}} %% small fraction  1/2
\newcommand{\bC}{\mathbb{C}}        %% complex numbers
\newcommand{\bR}{\mathbb{R}}        %% real numbers
\newcommand{\CC}{\mathbf{C}}        %% a parametrized curve
\newcommand{\pp}{\mathbf{p}}        %% a vector
\newcommand{\rr}{\mathbf{r}}        %% another vector
\newcommand{\sL}{\mathcal{L}}       %% a line
\newcommand{\sS}{\mathcal{S}}       %% a surface
\newcommand{\word}[1]{\quad\mbox{#1}\quad} %% well-spaced words
\newcommand{\url}[1]{{\small\texttt{#1}}} %% URL formatting
\begin{document}

\maketitle

\begin{abstract}
We develop a direct and elementary (calculus-free) exposition of the
famous cubic surface of revolution $x^3 + y^3 + z^3 - 3xyz = 1$.
\end{abstract}

%\tableofcontents

%---------------------------------------------------------------------

\section{Introduction} % 1

A well-known exercise in classical differential geometry
\cite{Bar,Dean,Sal} is to show that the set $\sS$ of all points
$(x,y,z) \in \bR^3$ which satisfy the cubic equation
\begin{equation}
\label{eq:S} % (1)
\boxed{F(x,y,z) \equiv x^3 + y^3 + z^3 - 3xyz - 1 = 0}
\end{equation}
is a \emph{surface of revolution}. 

The standard proof (\cite{Dean} and \cite[p.~11]{Sal}), which, in
principle, goes back to \textsc{Lagrange}~\cite{La} and
\textsc{Monge}~\cite{Mo}, is to verify that \eqref{eq:S} satisfies the
partial differential equation (here written as a determinant):
$$
\begin{vmatrix}
F_x(x,y,z) & F_y(x,y,z) & F_z(x,y,z) \\
     x - a &    y - b   & z - c \\
         l &      m     & n
\end{vmatrix} = 0
$$
which characterizes any surface of revolution $F(x,y,z) = 0$ whose
axis of revolution has direction numbers $(l,m,n)$ and goes through
the point $(a,b,c)$. This PDE, for its part, expresses the geometric
property that the normal line through any point of $\sS$ must
intersect the axis of revolution (this is rather subtle;
see~\cite{Gl}). All of this, though perfectly correct, seems
complicated and rather sophisticated just to show that one can obtain
$\sS$ by rotating a suitable curve around a certain fixed line.
Moreover, to carry out this proof one needs to know \emph{a priori}
just what this axis is, something not immediately clear from the
statement of the problem. Nor does the solution give much of a clue as
to \emph{which} curve one rotates.

A search of the literature failed to turn up a treatment of the
problem which differs significantly from that sketched above (although
see~\cite{Bar}).

The polynomial \eqref{eq:S} is quite famous and has been the object of
numerous algebraical and number theoretical investigations. See the
delightful and informative paper~\cite{mch}. See also the remarks at
the end of this paper. It therefore is all the more surprising that an
elementary treatment of its geometrical nature as a surface of
revolution is apparently not to be found in any readily available
source.

Therefore, this paper offers two detailed fully elementary and
calculus-free solutions of the problem based on simple coordinate
geometry. We will obtain a parametric representation of a meridian
curve whose rotation produces $\sS$ as well as a parametric
representation of $\sS$ itself, which we have not seen before
(although it can hardly be new).

Moreover, we relate the surface $\sS$ to the general theory of cubic
surfaces, of which there is an enormous literature (see \cite{lines}),
and in particular we prove a version of the famous Salmon--Cayley
theorem which asserts that any nonsingular (complex) cubic surface
contains $27$ straight lines. 

Finally, this parametrization of $\sS$ and the theory of
\emph{pythagorean triples} will permit us to find infinitely many
\emph{rational} points on the surface $\sS$ defined by the equation~\eqref{eq:S} via our
rational parametrization of~$\sS$.

%---------------------------------------------------------------------

\section{The first elementary solution} % 2

The celebrated factorization
\begin{equation}
\label{eq:I} % (2)
x^3 + y^3 + z^3 - 3xyz \equiv(x + y + z)(x^2 + y^2 + z^2 - xy - yz - zx)
\end{equation}
is an endless source of Olympiad problems and is the basis of our first
solution. Let
\begin{equation}
\label{eq:tR} % (3)
t := x + y + z,
\end{equation}
where we assume $t > 0$. Indeed, since the second factor 
in~\eqref{eq:I} is
$$
x^2 + y^2 + z^2 - xy - yz - zx 
= \half(x - y)^2 + \half(x - z)^2 + \half(y - z)^2,
$$
and since we are only interested in finite real points, it cannot be
negative, so \emph{there is no solution to~\eqref{eq:S} with}
$t \leq 0$.

Now we write the equation of $\sS$ in the form
\begin{equation}
\label{eq:sphere} % (4)
x^2 + y^2 + z^2 = \frac{2}{3(x + y + z)} + \frac{(x + y + z)^2}{3}
\equiv \frac{2}{3t} + \frac{t^2}{3}
\end{equation}
which is legitimate because we just showed that $x + y + z \neq 0$
on~$\sS$. This is the equation of a \emph{sphere whose center is at
the origin and whose radius is}
$\sqrt{\frac{2}{3t} + \frac{t^2}{3}}\,$.

Let $\Sigma_t$ denote this sphere~\eqref{eq:sphere} and let $\Pi_t$
denote the plane $x + y + z = t$.  Finally, let
$$
\Gamma_t := \Sigma_t \cap \Pi_t.
$$

We can see that $\Gamma_t$ is nonempty for $t>0$ since the
square of the distance from the origin to $\Pi_t$ is $t^2/3$ and this
is less than the square of the radius $2/3t + t^2/3$ of the
sphere~$\Sigma_t$.

Thus $\Gamma_t$ \emph{is a circle with center on
the line $x = y = z$ and in the plane $\Pi_t$ orthogonal to that line}.
If the intersection were a single point, the distance from the 
origin to the plane would equal the radius of the sphere, that is,
 $t^2/3 = 2/3t + t^2/3$, which is impossible.
Then it follows that $\Gamma_t \subset \sS$ for all $t > 0$ and that, therefore
$$
\sS = \bigcup_{t>0} \Gamma_t \,.
$$

Moreover, the pythagorean theorem now shows that the square of the
radius of $\Gamma_t$ is~$2/3t$. 

Therefore, we have proved the following result.

\begin{thm} % 1
The surface $\sS$ is a \textbf{surface of revolution} formed by the
union of all the circles with variable center at
$\bigl(\frac{t}{3},\frac{t}{3},\frac{t}{3}\bigr)$, $0 < t < \infty$,
and radius $\sqrt{2/3t}$. Each such circle lies in the plane
$x + y + z = t$, which cuts the corresponding line $x = y = z$
perpendicularly.
\qed
\end{thm}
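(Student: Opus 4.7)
My plan is to exhibit $\sS$ as the union of a one-parameter family of circles, each centered on the line $\ell : x=y=z$ and lying in a plane perpendicular to $\ell$; such a union is manifestly invariant under rotations about $\ell$, which is the definition of a surface of revolution with axis $\ell$.

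The algebraic device that makes this possible is the factorization \eqref{eq:I}. Setting $t := x+y+z$, the sum-of-squares form of the second factor in \eqref{eq:I} is nonnegative, so on $\sS$ the value $t$ must be strictly positive (if $t=0$ then \eqref{eq:S} reads $0=1$, and $t<0$ is excluded by sign). Dividing \eqref{eq:S} through by $3t$ rearranges it into the sphere equation \eqref{eq:sphere}, namely $x^2+y^2+z^2 = \tfrac{2}{3t}+\tfrac{t^2}{3}$. Thus every point of $\sS$ belongs to $\Gamma_t := \Sigma_t \cap \Pi_t$ for its corresponding value of $t$, while conversely any point of $\Sigma_t \cap \Pi_t$ automatically satisfies \eqref{eq:S}; hence $\sS = \bigcup_{t>0} \Gamma_t$.

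To identify $\Gamma_t$ with the advertised circle I would compare the radius $\sqrt{\tfrac{2}{3t}+\tfrac{t^2}{3}}$ of $\Sigma_t$ with the distance $t/\sqrt{3}$ from the origin to $\Pi_t$. The inequality between their squares reduces to $2/(3t) > 0$, which holds for every $t>0$, so $\Gamma_t$ is a genuine (nondegenerate) circle. Its center is the foot of the perpendicular from the origin to $\Pi_t$, which by the symmetry of $x+y+z=t$ is the point $(t/3, t/3, t/3)$ on $\ell$, and the Pythagorean theorem applied in a meridian plane gives the radius as $\sqrt{2/(3t)}$. Since $\Pi_t \perp \ell$, each $\Gamma_t$ is a circle of latitude about $\ell$, and so $\sS$ is a surface of revolution with axis $\ell$. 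I do not anticipate a serious obstacle; the step that deserves the most care is precisely the strict inequality just noted, since equality would collapse some $\Gamma_t$ to a single point on $\ell$ and undermine the sweeping description, but this inequality is immediate from $t>0$.
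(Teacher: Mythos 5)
Your proposal is correct and follows essentially the same route as the paper: factor via \eqref{eq:I}, deduce $t>0$, rewrite \eqref{eq:S} as the sphere equation \eqref{eq:sphere}, and identify each slice $\Gamma_t=\Sigma_t\cap\Pi_t$ as a nondegenerate circle centered at $\bigl(\tfrac{t}{3},\tfrac{t}{3},\tfrac{t}{3}\bigr)$ of radius $\sqrt{2/(3t)}$ by comparing the sphere's radius with the distance $t/\sqrt{3}$ and applying the Pythagorean theorem. The only point worth spelling out slightly more is the converse inclusion (that points of $\Sigma_t\cap\Pi_t$ satisfy \eqref{eq:S}), which follows by the same factorization since $x^2+y^2+z^2-xy-yz-zx=1/t$ there; otherwise your argument matches the paper's.
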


We add the remark that the equation
$$
x^3 + y^3 + z^3 - r\cdot xyz = 1,
$$
where $r \in \bR$, is a surface of revolution \emph{only for $r = 3$}.
(We prove this later on.) Thus, our equation~\eqref{eq:S} is quite
special.

%%%%%%%%%%%%%%%%%%%%%%%%%%%%%%%%%%%%%%%%%%%%%%%%

\section{Parametrizations} % 3

Now we can parametrize a meridian curve of $\sS$. We recall that the
intersection with $\sS$ of any plane that contains the axis of
revolution of $\sS$ is a meridian curve of $\sS$. The plane
$2z = x + y$ contains the line $x = y = z$ and its normal has
direction numbers $(1,1,-2)$. A unit vector parallel to this normal is
$\bigl(\frac{1}{\sqrt{6}},\frac{1}{\sqrt{6}},-\frac{2}{\sqrt{6}}\bigr)$.
Therefore the vector
$$
\rr_1 := \sqrt{\frac{2}{3t}}\,
\biggl(\frac{1}{\sqrt{6}}, \frac{1}{\sqrt{6}}, -\frac{2}{\sqrt{6}}\biggr)
$$
is a normal vector to $x = y = z$ and its length is the radius of the
circle of intersection. Forming its vector sum with
$\bigl(\frac{t}{3},\frac{t}{3},\frac{t}{3}\bigr)$, we have proved the
following result.

\begin{thm} % 2
The following parameterization gives us a meridian curve $\CC(t)$
of~$\sS$:
$$
\CC(t) = \biggl( \frac{t}{3} + \frac{1}{3\sqrt{t}},
\frac{t}{3} + \frac{1}{3\sqrt{t}},
\frac{t}{3} - \frac{2}{3\sqrt{t}} \biggr)
$$
where $0 < t < \infty$.
\qed
\end{thm}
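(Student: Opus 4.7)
The plan is to exhibit the meridian curve as the intersection of $\sS$ with a particular plane through the axis of revolution $x=y=z$, and to locate each point of this curve on the corresponding circle $\Gamma_t$ produced by the preceding theorem.

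First I would pick a convenient meridian plane. Any plane $ax+by+cz=0$ with $a+b+c=0$ contains the whole axis $x=y=z$, and the simplest choice is $x+y-2z=0$. Its unit normal is $\hat n=(1,1,-2)/\sqrt 6$, and two features of $\hat n$ make the rest of the computation almost automatic: $\hat n$ is orthogonal to the axis direction $(1,1,1)$, and its components sum to $0$, so $\hat n$ is itself a direction vector in every plane $\Pi_t : x+y+z=t$.

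Next, for each $t>0$ the meridian plane cuts $\Pi_t$ in a line through the center $C_t=(t/3,t/3,t/3)$ of $\Gamma_t$ with direction $\hat n$. Because this line lies in the plane of $\Gamma_t$ and passes through the center, it meets $\Gamma_t$ in exactly the two diametrically opposite points $C_t\pm r_t\hat n$, where $r_t=\sqrt{2/(3t)}$ is the radius supplied by Theorem~1. A one-line simplification gives $r_t\hat n=(1/(3\sqrt t),\,1/(3\sqrt t),\,-2/(3\sqrt t))$, and adding this to $C_t$ reproduces exactly the stated formula for $\CC(t)$.

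I do not anticipate any real difficulty. Each ingredient (axis contained in the plane; normal orthogonal to the axis; $\hat n\in\Pi_t$; a line through the center of a circle meets the circle at its two diametral endpoints) is immediate, and the only arithmetic is the reduction of $\sqrt{2/(3t)}\,/\sqrt 6$ to $1/(3\sqrt t)$. The one thing worth flagging is that as $t$ ranges over $(0,\infty)$ the formula traces \emph{one} of the two branches of the full meridian cross-section $\sS\cap\{x+y-2z=0\}$; the other branch corresponds to the opposite sign of $\hat n$ and could equally well be called a meridian curve.
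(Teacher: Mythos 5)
Your final formula and the core computation (the center of $\Gamma_t$ plus the radius $\sqrt{2/(3t)}$ times the unit vector $(1,1,-2)/\sqrt6$ orthogonal to the axis) coincide with the paper's, but one step of your justification is wrong as stated. The vector $\hat n=(1,1,-2)/\sqrt6$ is the \emph{normal} of your chosen plane $x+y-2z=0$, hence orthogonal to that plane; it cannot be the direction of a line lying in it. The intersection line $\{x+y-2z=0\}\cap\Pi_t$ has direction $(1,1,-2)\times(1,1,1)$, which is parallel to $(1,-1,0)$, so it meets $\Gamma_t$ in the two points $\bigl(\tfrac{t}{3}\pm\tfrac{1}{\sqrt{3t}},\,\tfrac{t}{3}\mp\tfrac{1}{\sqrt{3t}},\,\tfrac{t}{3}\bigr)$, not in $C_t\pm r_t\hat n$ (here $C_t=(t/3,t/3,t/3)$, $r_t=\sqrt{2/(3t)}$). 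Accordingly your closing remark is also false: $\CC(t)$ does not lie in $\{x+y-2z=0\}$ at all, since for the stated point $x+y-2z=\tfrac{2}{\sqrt t}\neq 0$.

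The conclusion survives because the points $C_t+r_t\hat n$ all lie in the plane spanned by the axis and $\hat n$, namely $x=y$ (the first two coordinates of $\CC(t)$ are equal), and that plane also contains the axis $x=y=z$; so $\CC(t)$ is one branch of the meridian section $\sS\cap\{x=y\}$. The repair is one line: either take the meridian plane to be $x=y$, whose intersection with $\Pi_t$ really is the line through $C_t$ with direction $\hat n$, or argue as the paper does, namely that $\hat n$ is a unit vector orthogonal to the axis (being the normal of a plane containing the axis) whose components sum to zero, so $C_t+r_t\hat n\in\Gamma_t\subset\sS$ for every $t>0$, and these points sweep out a curve in the fixed plane $x=y$ through the axis. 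With that correction your argument is essentially identical to the paper's.
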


A vector perpendicular to $x = y = z$ and to $(1,1,-2)$ simultaneously
is $(1,-1,0)$. A unit vector parallel to it is
$\bigl(\frac{1}{\sqrt{2}},-\frac{1}{\sqrt{2}},0\bigr)$ and
$$
\rr_2 := \sqrt{\dfrac{2}{3t}}\,
\biggl(\frac{1}{\sqrt{2}}, -\frac{1}{\sqrt{2}}, 0\biggr)
$$
is a normal vector to $x = y = z$ whose length is the radius of the
circle of intersection.

Moreover, as we noted earlier, $\rr_1 \perp \rr_2\,$.

Therefore, we can parameterize the surface $\sS$ as follows:
$$
\rr(t,\theta) := \biggl(\frac{t}{3}, \frac{t}{3}, \frac{t}{3}\biggr)
+ \rr_1 \cos\theta + \rr_2 \sin\theta
$$
or, writing
$\rr(t,\theta) := \bigl( x(t,\theta), y(t,\theta), z(t,\theta) \bigr)$,
we have proved the following result.

\begin{thm} % 3
A coordinate parametrization of~$\sS$ is
\begin{align*}
x(t,\theta) &= \frac{t}{3} + \frac{1}{3\sqrt{t}} \cos\theta
+ \frac{1}{\sqrt{3t}} \sin\theta
\\[\jot]
y(t,\theta) &= \frac{t}{3} + \frac{1}{3\sqrt{t}} \cos\theta
- \frac{1}{\sqrt{3t}} \sin\theta
\\[\jot]
z(t,\theta) &= \frac{t}{3} - \frac{2}{3\sqrt{t}} \cos\theta
\end{align*}
where $0 < t < \infty$ and $0 \leq \theta < 2\pi$.
\qed
\end{thm}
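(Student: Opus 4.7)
The plan is to combine Theorem 1 --- which identifies $\sS$ as the union over $t>0$ of the circles $\Gamma_t$ with center $(\tfrac{t}{3},\tfrac{t}{3},\tfrac{t}{3})$, radius $\sqrt{2/(3t)}$, lying in the plane $\Pi_t:x+y+z=t$ --- with the explicit orthonormal in-plane frame assembled just above the statement. Any point on a circle of radius $R$ with center $\pp$ and orthonormal in-plane frame $\hat{u},\hat{v}$ can be written as $\pp+R\hat{u}\cos\theta+R\hat{v}\sin\theta$, so the proof is just a matter of plugging in.

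First I would verify that the unit vectors $\hat{u}:=\frac{1}{\sqrt{6}}(1,1,-2)$ and $\hat{v}:=\frac{1}{\sqrt{2}}(1,-1,0)$ form such a frame for $\Gamma_t$. This reduces to three one-line checks: $\hat{u}\cdot(1,1,1)=0$, $\hat{v}\cdot(1,1,1)=0$, and $\hat{u}\cdot\hat{v}=0$, so that both are tangent to $\Pi_t$ and mutually perpendicular. Scaling by the radius $\sqrt{2/(3t)}$ recovers the vectors $\rr_1,\rr_2$ already named in the excerpt. Consequently
$$
\rr(t,\theta)=\bigl(\tfrac{t}{3},\tfrac{t}{3},\tfrac{t}{3}\bigr)+\rr_1\cos\theta+\rr_2\sin\theta
$$
parametrizes $\Gamma_t$ as $\theta$ runs through $[0,2\pi)$. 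Writing out each component and using the simplifications $\sqrt{2/(3t)}/\sqrt{6}=1/(3\sqrt{t})$ and $\sqrt{2/(3t)}/\sqrt{2}=1/\sqrt{3t}$ yields exactly the three displayed formulas.

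Surjectivity onto $\sS$ then follows immediately from Theorem 1, since $\sS=\bigcup_{t>0}\Gamma_t$ and $\theta\mapsto\rr(t,\theta)$ traces out all of $\Gamma_t$ for each fixed $t$. There is really no substantive obstacle here: the proof is pure bookkeeping once the frame $\{\hat{u},\hat{v}\}$ is in hand. The only prudent safeguard I would include is the sanity check that $x(t,\theta)+y(t,\theta)+z(t,\theta)=t$ identically, since all $\cos\theta$ and $\sin\theta$ contributions must cancel; this confirms that every parametrized point lies in $\Pi_t$, and combined with the (automatic) distance $\sqrt{2/(3t)}$ from the center, places it on $\Gamma_t\subset\sS$ as required.
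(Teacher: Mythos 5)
Your proposal is correct and follows essentially the same route as the paper: it uses the orthonormal in-plane vectors $\frac{1}{\sqrt{6}}(1,1,-2)$ and $\frac{1}{\sqrt{2}}(1,-1,0)$ scaled by the radius $\sqrt{2/(3t)}$ to write each circle $\Gamma_t$ as center plus $\rr_1\cos\theta+\rr_2\sin\theta$, exactly as the paper does, with surjectivity supplied by Theorem~1. The simplifications $\sqrt{2/(3t)}/\sqrt{6}=1/(3\sqrt{t})$ and $\sqrt{2/(3t)}/\sqrt{2}=1/\sqrt{3t}$ are the same bookkeeping the paper relies on.
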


It is a pleasant surprise that the cubic surface $\sS$ has an
elementary parametrization. However, we could have predicted the
existence of such a parameterization \emph{a priori.} For, it is shown
in the general theory of cubic surfaces \cite{Polo} that a real cubic
surface has a rational parametrization over the real numbers if and
only if its real support is a connected set. However, it is not easy
of find such parameterizations and much research has been dedicated to
creating algorithms for producing them (again, see~\cite{Polo}).
We conjecture that the real surface represented by the equation
$$
x^3 + y^3 + z^3 - r\cdot xyz = 1
$$
where $r \in \bR$, is connected if and only if $-\infty < r \leq 3$.
This conjecture seems difficult to prove for $r < 3$ although it is
evident geometrically if one uses computer generated graphs for
suitable values of~$r$. Moreover, the theorem on the existence of
parameterizations for connected cubic surfaces requires % far 
more advanced techniques than are appropriate for our paper. We add
that it is well known that the trigonometric functions in our
parametrization can be replaced by suitable rational functions of a
single parameter. Indeed, we do so in Section~\ref{sec:rat-solns}
below.

%%%%%%%%%%%%%%%%%%%%%%%%%%%%%%%%%%%%%%%%%%%%%%%%%%%%%%%%%

\section{An alternate solution} % 4

Another way to show that $\sS$ is a surface of revolution is to rotate
the surface $\sS$ around the origin in such a way that the plane
$x + y + z = 0$ becomes the new $XY$-plane, the line $x = y = z$
becomes the new $Z$-axis and the line $x + y = 0$, $z = 0$ becomes the
new $X$-axis. This is accomplished by the rotation equations:
\begin{align*}
x &= \frac{X}{\sqrt{2}} + \frac{Y}{\sqrt{6}} + \frac{Z}{\sqrt{3}} \,,
\\
y &= -\frac{X}{\sqrt{2}} + \frac{Y}{\sqrt{6}} + \frac{Z}{\sqrt{3}} \,,
\\
z &= -\frac{2Y}{\sqrt{6}} + \frac{Z}{\sqrt{3}} \,.
\end{align*}
We find that the surface $\sS$ has the equation
$$
\boxed{Z = \frac{2}{3\sqrt{3}(X^2 + Y^2)}}
$$
which explicitly shows that it is a surface of revolution around the
$Z$-axis obtained by rotating the curve
$Z = \frac{2}{3\sqrt{3}\,Y^2}$.

We remark that the same rotation equations, applied to the equation
$$
x^3 + y^3 + z^3 - r\cdot xyz = 1
$$
where $r \in \bR$, gives an equation which is of the form
$Z = f(X^2 + Y^2)$ if and only if $r = 3$, and therefore
\emph{is not a surface of revolution} with the axis $x=y=z$ for $r \neq 3$.

%%%%%%%%%%%%%%%%%%%%%%%%%%%%%%%%%%%%%%%%%%%%%%%   

\section{Singular points of cubic surfaces} % 5

Let $F(w,x,y,z)$ be an irreducible complex homogeneous polynomial of
(total) degree three in the polynomial ring $\bC[w,x,y,z]$. Then the
point $(w,x,y,z) = (w_0,x_0,y_0,z_0) \equiv \pp$ is a \emph{singular
point} of the algebraic surface $\mathbf{S}$ defined by the equation
$F(w,x,y,z) = 0$ if and only if
$$
\nabla F(\pp) = \mathbf{0} \,;
$$
that is, if and only if
$$
F(\pp) = F_w(\pp) = F_x(\pp) = F_y(\pp) = F_z(\pp) = 0.
$$
The surface $\mathbf{S}$ is called \emph{singular} if it has a
singular point, otherwise it is called \emph{nonsingular.}

Our cubic surface, $\sS$, turns out to be singular. Indeed, the
singular points defined by 
\begin{equation}
\label{eq:Hcubic} % (5)
x^3 + y^3 + z^3 - 3\,xyz = w^3
\end{equation}
are
$$
(0,1,1,1),  \quad
(0,1,\epsilon,\epsilon^2),  \quad
(0,1,\epsilon^2,\epsilon),
$$
where $\epsilon$ is a complex cube root of unity.

The singular points of the rotated form
\begin{equation}
\label{eq:rotated} % (6)
z(x^2 + y^2) - \frac{2}{3\sqrt{3}}\, w^3 = 0
\end{equation}
are
$$
(0,i,1,0),  \quad  (0,-i,1,0),  \quad  (0,0,0,1).
$$

Singular points on a cubic surface can be grouped into different
classes \cite[p.~135]{CN}. 

Suppose that $(w,x,y,z) = (w_0,x_0,y_0,z_0)$ is an isolated singular
point and that the Taylor expansion is:
\begin{align*}
F(w_0, x_0 + x, y_0 + y, z_0 + z)
&= \alpha_{11} x^2 + \alpha_{22} y^2 + \alpha_{33} z^2
\\
&\qquad + 2\alpha_{12} xy + 2\alpha_{13} xz + 2\alpha_{23} yz
+ F_3(x,y,z),
\end{align*}
where $F_3(x,y,z)$ is homogeneous of degree~$3$. The matrix of the
coefficients of the above quadratic form is
$$
\alpha:= \begin{pmatrix}
\alpha_{11} & \alpha_{12} & \alpha_{13} \\
\alpha_{21} & \alpha_{22} & \alpha_{23} \\
\alpha_{31} & \alpha_{32} & \alpha_{33} \end{pmatrix}.
$$
If the determinant of $\alpha$ is zero (i.e., the matrix is singular)
and if the matrix has rank~$2$, then the singular point is
\emph{a biplanar double point} or a~\emph{binode}. The term originates
from the surface having two tangent planes at such a point. For
example, the surface defined by the equation $z^ 3- x^2 + y^2 = 0$ has
a binode at the origin with tangent planes $x \pm y = 0$.

In order to apply these definitions to the surface, we make the
affine change of variable 
$$
X := x + iy,  \quad  Y := x - iy,  \quad  Z := z,  \quad
W := -\biggl( \frac{3\sqrt{3}}{2} \biggr)^{\!1/3} w
$$
in the rotated equation. Then the equation of the 
surface~\eqref{eq:rotated} becomes

\begin{equation}
\label{eq:canon} % (7)
XYZ + W^3 = 0,
\end{equation}
and its singularities are
$$
(0,1,0,0),  \quad  (0,0,1,0),  \quad  (0,0,0,1).
$$
%%% in the new coordinates, I assume
Then, at the point $\pp= (w_0,x_0,y_0,z_0)$ our matrix is 
$$
\alpha = \begin{pmatrix}
0 & z_0 & y_0 \\
z_0 & 0 & y_0 \\
y_0 & x_0 & 0 \end{pmatrix}.
$$

This shows that \emph{each of our singular points is a binode in the
plane} $W = 0$.

%%%%%%%%%%%%%%%%%%%%%%%%%%%%%%%%%%%%%%%%%%%%%%%%%%%%%%%%%%%%%

\section{The twenty seven lines on a cubic surface} % 6

In 1849, \textsc{Salmon} \cite[p.~183]{Sal} proved the celebrated
theorem that every nonsingular cubic surface contains twenty seven
(real and/or complex) straight lines. Complete treatises
(see~\cite{Hen}) have been written on this theorem and it continues to
be a source of modern research~\cite{lines}. Sixteen years later,
\textsc{Schl\"afli}~\cite{Sch}, and six years after that,
\textsc{Cayley}~\cite{Cay} wrote long papers, not altogether easy to
read, extending Salmon's theorem to \emph{singular} cubic surfaces.
They classified cubic surfaces into $23$ ``species" according to the
kind of singularities they possess, and found the number of lines
associated with each species. The presence of singularities
\emph{decreases} the number of lines, and the smallest number on a
singular surface is~$3$. Then, more than a century later (1978),
\textsc{Bruce} and \textsc{Wall}~\cite{BW} revisited the work of
Schl\"afli and Cayley using modern techniques and obtained
$21$~species of cubic surfaces.

Since our cubic surface $\sS$ has three binodes as singularities,
Schl\"afli \cite[p.~239]{Sch} and Cayley, \cite[pp.~320--321]{Cay}
placed it in class~XXI while Bruce and Wall \cite[p.~253]{BW} placed
it in class~$3A_2$. Both classifications assign \emph{three lines} to
the surface. If we refer to equation~\eqref{eq:canon}, the three lines
are
\begin{align*}
X = 0, \ W = 0;  &&  Y = 0, \ W = 0;  &&  Z = 0, \ W = 0;
\end{align*}
while the lines for the original cubic \eqref{eq:Hcubic} are
\begin{align*}
x + y + z = 0, \ w = 0;
&& x + \epsilon y + \epsilon^2 z = 0, \ w = 0;
&& x + \epsilon^2 y + \epsilon z = 0, \ w = 0; 
\end{align*}
where $\epsilon$ is a complex cubic root of unity. These are all lines
``at infinity'' since $w = 0$ for all three. Moreover, the first line
is real and the other two are complex. According to Schl\"afli and
Cayley, the first line belongs to subspecies~XXI.1 and the other two
``conjugate'' lines (in their terminology) belong to subspecies~XXI.2.

The proof that these are the \emph{only} lines belonging to our
surface is a consequence of the general theories these authors
develop: but that proof is not easily distilled to our special case.

Thus, it is of interest to directly prove this theorem for our own
special cubic surface~$\sS$. We will first prove the following special
case.

\begin{thm} % 4
The cubic surface of revolution $\sS$ does not contain any finite real
lines.
\end{thm}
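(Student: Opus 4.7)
The plan is to use the rotated form $3\sqrt{3}\,Z(X^2+Y^2) = 2$ of $\sS$ derived in the previous section, which makes the algebra clean. Suppose for contradiction that a line
$$
L : (X,Y,Z) = (X_0+at,\, Y_0+bt,\, Z_0+ct), \qquad (a,b,c) \neq \mathbf{0},
$$
lies entirely on $\sS$. Substituting into the equation yields a polynomial in $t$ of degree at most three that must equal the constant $2$ identically, so each of its four coefficients is determined individually. Equating them in turn will quickly force $(a,b,c) = \mathbf{0}$.

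The constant term yields $3\sqrt{3}\,Z_0(X_0^2+Y_0^2) = 2$, so in particular $Z_0 \neq 0$ and $(X_0,Y_0) \neq (0,0)$. The leading coefficient of $t^3$ is a nonzero multiple of $c(a^2+b^2)$, so either $c = 0$ or $a = b = 0$. In the first case, the $t^2$-coefficient reduces to a nonzero multiple of $Z_0(a^2+b^2)$, forcing $a = b = 0$ as well and contradicting $(a,b,c) \neq \mathbf{0}$. In the second case $c \neq 0$, and the $t^1$-coefficient reduces to a nonzero multiple of $c(X_0^2+Y_0^2)$, forcing $(X_0,Y_0) = (0,0)$ and contradicting the constant term. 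Either way no such line $L$ can exist.

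I do not expect any serious obstacle here: the rotated equation has already absorbed all of the geometry, and the remaining argument is just a case split on the vanishing of the degree-$3$ coefficient in $t$. Geometrically the conclusion is intuitively obvious, since $\sS$ is the graph of the strictly positive function $Z = \tfrac{2}{3\sqrt{3}(X^2+Y^2)}$ over the punctured $(X,Y)$-plane: a non-vertical line would project onto a full line in the $(X,Y)$-plane, along which this function would have to be affine, and a vertical line meets $\sS$ in at most one point. The only mild care needed is the bookkeeping reminder that, because we are asking the substituted cubic in $t$ to \emph{vanish identically}, each of its coefficients must vanish separately.
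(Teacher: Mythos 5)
Your proof is correct and takes essentially the same approach as the paper: substitute a parametrized line into the rotated equation $3\sqrt{3}\,Z(X^2+Y^2)=2$ and force every coefficient of the resulting polynomial in $t$ to vanish, concluding that the supposed line degenerates to a point. The only difference is organizational: the paper first invokes $Z>0$ on $\sS$ to restrict to lines parallel to the $XY$-plane, so its substituted polynomial is merely quadratic, whereas you treat a general line and dispose of the cubic's four coefficients with a short case split; both routes reach the same contradiction.
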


\begin{proof} 
The rotated form of the equation for $\sS$ shows that
$Z>0$ for all (finite) points of the surface.  Therefore, any
line wholly contained in $\sS$ cannot intersect the
$XY$-plane, i.e., it must be \emph{parallel} to the plane $Z = 0$.
Any  such line has the parametric representation
$$
\ell = (at + b, ct + d, e),
$$

where $t \in \bR$ while $a,b,c,d$ and $e > 0$ are real constants.
Since $\ell$ is wholly contained in $\sS$ the three
coordinates in the parametric representation identically satisfy
the equation for $\sS$:
$$
e\bigl[ (at + b)^2 + (ct + d)^2 \bigr] - \frac{2}{3\sqrt{3}} = 0.
$$
If we divide by $e$ and rearrange this in powers of~$t$, we find that
the coefficient of~$t^2$ is
$$
a^2 + c^2.
$$
Since this must hold for all real~$t$, the individual coefficients
must all vanish. In particular, the equation
$$
a^2 + c^2 = 0
$$
must hold. This means that
$$
a = c = 0,
$$
which in turn means that the line is the point $(b,d,e)$. This
contradiction shows that any such line in~$\sS$ must be nonreal.
\end{proof}

We thank the referee for his elegant proof of the following more
general result.

\begin{thm} % 5
The cubic surface of revolution $\sS$ does not contain any finite real
or complex lines.
\end{thm}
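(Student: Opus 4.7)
The plan is to work in the rotated coordinates, where $\sS$ has the simple form $Z(X^2+Y^2)=\tfrac{2}{3\sqrt{3}}$, and to exploit the complex factorization $X^2+Y^2=(X+iY)(X-iY)$. Setting $U:=X+iY$, $V:=X-iY$, and $c:=\tfrac{2}{3\sqrt{3}}\neq 0$, the equation of $\sS$ becomes $Z\cdot U\cdot V=c$. A finite complex line in $\bC^3$ can be parametrized as $(X,Y,Z)=(a_1 t+b_1,\,a_2 t+b_2,\,a_3 t+b_3)$ with $t\in\bC$ and direction vector $(a_1,a_2,a_3)\neq(0,0,0)$; under this parametrization, $Z(t)$, $U(t)$, and $V(t)$ become affine polynomials in $\bC[t]$.

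For such a line to lie wholly on $\sS$, the identity $Z(t)\,U(t)\,V(t)\equiv c$ must hold in the polynomial ring $\bC[t]$. Since $\bC[t]$ is an integral domain, the degree of a product equals the sum of the degrees of the factors, so $\deg Z+\deg U+\deg V=0$. This forces each of $Z(t)$, $U(t)$, $V(t)$ to be a nonzero constant polynomial, meaning that the coefficients of $t$ in all three must vanish. This yields the linear system $a_3=0$, $a_1+ia_2=0$, $a_1-ia_2=0$, whose only solution is $a_1=a_2=a_3=0$, contradicting the requirement that a line have a nonzero direction vector.

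The essential difference from the previous (real) theorem is that over $\bR$ the identity $a_1^2+a_2^2=0$ forces $a_1=a_2=0$ by nonnegativity, whereas over $\bC$ it does not; the factorization $X^2+Y^2=(X+iY)(X-iY)$ is precisely what compensates, by splitting the single equation $a_1^2+a_2^2=0$ into the two independent linear equations used above. The only genuine obstacle is the integral-domain observation that a nonzero constant in $\bC[t]$ cannot be the product of nonconstant polynomials; once this is noted, the rest is a two-line linear computation.
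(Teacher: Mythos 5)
Your proof is correct. In fact your substitution $U=X+iY$, $V=X-iY$ reconstructs exactly the paper's own canonical coordinates (the paper passes from the rotated form to $XYZ+W^3=0$ and then sets $W=-1$ to get $xyz=1$), so both arguments rest on the same reduction: a finite line on $\sS$ would make the product of three affine-linear functions of the line parameter identically equal to a nonzero constant. Where you differ is in how the contradiction is extracted. The paper parametrizes the line by two distinct points, divides by $\lambda^3$ and lets $\lambda\to\infty$ to pick off the leading coefficient, obtaining $(a-A)(b-B)(c-C)=0$, and then runs a short case analysis ($a=A$, etc.) ending with the two points coinciding. You instead invoke degree additivity in the integral domain $\bC[t]$: a nonzero constant cannot be a product of nonconstant polynomials, so $Z(t)$, $U(t)$, $V(t)$ are all constant, which kills the direction vector in one stroke and dispenses with the case analysis. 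Your route is slightly cleaner and makes the underlying mechanism (degree count) explicit, while the paper's limit argument is more elementary in the sense of not naming any ring-theoretic fact; both end, as the paper remarks of its two proofs, with the putative line collapsing to a point. The only thing worth adding to your write-up is a sentence noting that the real rotation and the substitution $(U,V,Z)$ together form an invertible complex-linear change of coordinates on $\bC^3$, so finite lines correspond to finite lines and nonexistence for $ZUV=c$ indeed gives nonexistence for $\sS$; the paper leaves the analogous step implicit as well.
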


\begin{proof}
Take $W = -1$ in equation \eqref{eq:canon} and write the coordinates as
$(x,y,z)$ instead of $(X,Y,Z)$. Therefore, our equation is
\begin{equation}
\label{eq:xyz} % (7)
xyz = 1.
\end{equation}
Any line $\sL$ in $\sS$ other than the three at infinity must be the
join of two distinct finite points of $\sS$, say $(a,b,c) \in \bC^3$
and $(A,B,C) \in \bC^3$. The general point of~$\sL$ is given by
$$
(1 - \lambda)(a,b,c) + \lambda(A,B,C),  \word{where}  \lambda \in \bC.
$$
Therefore, equation~\eqref{eq:xyz} becomes
\begin{equation}
\label{eq:lxyz} % (8)
\bigl[ \lambda a + (1 - \lambda)A \bigr] \cdot
\bigl[ \lambda b + (1 - \lambda)B \bigr] \cdot
\bigl[ \lambda c + (1 - \lambda)C \bigr] = 1,
\end{equation}
and this must hold for all $\lambda \in \bC$. Dividing \eqref{eq:lxyz}
by $\lambda^3$ and letting $\lambda \to \infty$, we obtain
$$
(a - A)(b - B)(c - C) = 0.
$$
Therefore either $a = A$, or $b = B$, or $c = C$.  

Suppose $a = A$. Then equation~\eqref{eq:lxyz} becomes
\begin{equation}
\label{eq:lxyz2} % (9)
a \cdot \bigl[ \lambda b + (1 - \lambda)B \bigr] \cdot
\bigl[ \lambda c + (1 - \lambda)C \bigr] = 1  
\end{equation}
for all $\lambda \in \bC$. This shows,in particular, that $a \neq 0$.
Divide \eqref{eq:lxyz2} by $\lambda^2$ and let $\lambda \to \infty$.
Then \eqref{eq:lxyz2} becomes
$$
a(b - B)(c - C) = 0.
$$
But $a \neq 0$, which means that either $b = B$ or $c = C$,
necessarily. But either alternative implies the other, which means
that
$$
(a,b,c) = (A,B,C),
$$
and the line $\sL$ collapses to a single point. This contradicts the
assumption that $(a,b,c) \in \bC^3$ and $(A,B,C) \in \bC^3$ are
distinct finite points.

Finally, the alternatives $b = B$ or $c = C$ lead to the same false
conclusion, and therefore there are NO finite lines in~$\sS$.
\end{proof}

We note that both proofs, though based on
totally different ideas, lead to the same contradiction, namely
that the line, supposed to exist, collapses to a single point.

%%%%%%%%%%%%%%%%%%%%%%%%%%%%%%%%%%%%%%%%%%%

\section{Rational points on a cubic surface} % 7
\label{sec:rat-solns}
The history of the celebrated problem of finding rational points on a cubic surface is detailed in Chapter XXI of Dickson's monumental work on the history of the theory of numbers \cite{Dick}.  Subsequently the great british mathematician \textsc{L.J. Mordell} made fundamental contributions to solving this problem and he summarized them in his classic book \cite{Mor}.  In particular, on page 82 he states:
\begin{quote}
``No method is known for determining whether rational points  exist on a general cubic surface $f(x,y,z)=0$, or for finding all of them if any exist.  Geometric considerations may prove very helpful and sometimes by their help an infinity of solutions may be found..."

\end{quote}

This statement continues to be true, today.  And, as we will see, ``geometric considerations" will lead us to an infinity of rational points on $\sS$.

If we think of the equation \eqref{eq:S} as an equation in the three
unknowns $(x,y,z)$, we can obtain rational solutions by taking
$$
t =: u^2,  \qquad
\sin\theta =: \frac{2r\sqrt{3}}{r^2 + 3}\,, \qquad
\cos\theta =: \frac{r^2 - 3}{r^2 + 3}\,,
$$
in the parametric representation of $\sS$ where $u$ and $r$ run over all rational numbers.  Then we obtain: 

\begin{thm} % 4
If $u \neq 0$ and $r$ run over all rational numbers then the following
formulas
\begin{align*}
x &= \frac{u^2}{3} + \frac{1}{3u}\,\frac{r^2 - 3}{r^2 + 3}
+ \frac{1}{u}\,\frac{2r}{r^2 + 3} \,,
\\[\jot]
y &= \frac{u^2}{3} + \frac{1}{3u}\,\frac{r^2 - 3}{r^2 + 3}
- \frac{1}{u}\,\frac{2r}{r^2 + 3} \,,
\\[\jot]
z &= \frac{u^2}{3} - \frac{2}{3u}\,\frac{r^2 - 3}{r^2 + 3} \,,
\end{align*}
furnish infinitely many \textbf{rational points} on the cubic surface defined by$$
x^3 + y^3 + z^3 - 3\cdot xyz = 1.
$$
\end{thm}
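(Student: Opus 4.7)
The plan is to derive the claimed rational parametrization by specializing the coordinate parametrization of~$\sS$ established in Theorem~3. The key observation is that the only ``transcendental'' ingredients in that parametrization are $\sqrt{t}$, $\cos\theta$, and $\sin\theta$; if we can rationalize each of these while preserving the identity $\cos^2\theta + \sin^2\theta = 1$, then Theorem~3 itself certifies that the resulting point lies on~$\sS$, and no independent verification of the cubic equation is needed.

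The first move is to kill the square root in $t$ by setting $t := u^2$ for $u$ a nonzero rational, so that $\sqrt{t} = |u|$ and $\sqrt{3t} = |u|\sqrt{3}$. Inspection of the formulas in Theorem~3 then shows that $x,y,z$ will be rational in~$u$ provided $\cos\theta$ and $\sin\theta/\sqrt{3}$ are both rational. Note that $\sin\theta$ itself need \emph{not} be rational---the $\sqrt{3}$ sitting inside $\sqrt{3t}$ absorbs one factor of $\sqrt{3}$ from $\sin\theta$, and this is precisely what will let us use a Weierstrass-type substitution scaled by~$\sqrt{3}$.

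The second move is to supply such a substitution:
\[
\cos\theta := \frac{r^2 - 3}{r^2 + 3}, \qquad \sin\theta := \frac{2r\sqrt{3}}{r^2 + 3},
\]
so that $\sin\theta/\sqrt{3} = 2r/(r^2 + 3)$ is rational in~$r$, while a brief computation shows $\cos^2\theta + \sin^2\theta = \bigl(12r^2 + (r^2 - 3)^2\bigr)/(r^2 + 3)^2 = 1$. Substituting $t = u^2$ together with these trigonometric expressions into the parametrization of Theorem~3 produces exactly the three formulas in the statement, all of which are rational in the pair $(u,r)$. Because Theorem~3 guarantees that every such triple lies on~$\sS$, these automatically satisfy $x^3 + y^3 + z^3 - 3xyz = 1$.

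Infinitude is essentially free: Theorem~3 gives $x + y + z = t = u^2$, so letting $u$ range over distinct positive rationals (with, say, $r$ held fixed) already produces infinitely many distinct rational points on~$\sS$. The only real obstacle in this plan is the bookkeeping involved in matching the $\sqrt{3}$-factors so that everything cancels correctly; once that interaction between Steps~1 and~2 is set up, the argument reduces to a one-line substitution into an identity that has already been proved.
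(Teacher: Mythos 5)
Your proposal is correct and follows essentially the same route as the paper: substitute $t=u^2$ and the $\sqrt{3}$-scaled Weierstrass-type expressions for $\cos\theta$ and $\sin\theta$ into the parametrization of Theorem~3, check $\cos^2\theta+\sin^2\theta=1$, and get infinitude by varying $u$. The only cosmetic point is that for negative $u$ one has $\sqrt{t}=|u|$, so the stated formulas differ from the direct substitution by an overall sign on the trigonometric terms, which is harmless since it corresponds to replacing $\theta$ by $\theta+\pi$.
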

\hfill$\Box$

For example, if we take $u = 2$ and $r = \frac{1}{3}$, we obtain the
solution $x = \frac{9}{7}\,$, $y = \frac{15}{14}\,$,
$z = \frac{23}{14}\,$.

\medskip

Our rather \emph{ad hoc} formulas for $\cos\theta$ and $\sin\theta$ are based on the
standard formulas for the \emph{Pythagorean triples}  as applied to the give the complete (positive) rational number solution to the equation  $x^2+y^2=1$.  Namely, the solution to $a^2+b^2=c^2$ given by $a=2mn, b=m^2-n^2, c=m^2+n^2$ $m>n$ where $m$ and $n$ run through all integers becomes $x=\frac{2r}{r^2+1}, y=\frac{r^2-1}{r^2+1}$ where $r=\frac{m}{n}, n\neq 0$ runs through all positive rational numbers and we take $x=\sin\theta, y=\cos\theta.$ In order to cancel the term $\sqrt{3}$ in the denominator of the term multiplying $\sin\theta$ in the coordinate parametrization of $\sS$ we replace the numerator $2r$ by $\sqrt{3}\cdot 2r$.  But, in order to maintain the identity $x^2+y^2=1$ the constant $+1$ in there formulas for $x$ and $y$ must be replaced by $+3$ and our rational number parametrization results.

Although our
 rational parametrization of~$\sS$ gives infinitely many rational
solutions to the cubic equation~\eqref{eq:S}, it does not give all rational solutions.   For example, the solution
\begin{align}
 x=\frac{18}{7},&&y=\frac{16}{7},&&z=\frac{15}{7}
 \end{align}
 is not given by our formulas as the reader an check by eliminating $u$ and using the rational root theorem on the sextic polynomial equation that results for $r$ .

We only mention it to show that our
parametric representation of $\sS$, when conjoined with the famous
formulas for Pythagorean triples, gives us a nice bonus in the form of
a  rational parametrization of $\sS$. The complete rational
solution, as well as references to the work of \textsc{Ramanujan} and
others on this equation can be found in~\cite{gs}.

%---------------------------------------------------------------------

\subsubsection*{Acknowledgment}
Many thanks to the referee whose detailed review led to significant improvements in exposition and content.
Thanks to Joseph C. V\'arilly for helpful comments.
Support from the Vicerrector\'ia de Investigaci\'on of the 
University of Costa Rica is acknowledged.

%---------------------------------------------------------------------

\end{document}